\newtheorem{theorem}{Theorem}[section]
\newtheorem{lemma}[theorem]{Lemma}
\theoremstyle{definition}
\newtheorem{definition}[theorem]{Definition}
\newtheorem{proposition}[theorem]{Proposition}
\theoremstyle{remark}
\newtheorem{remark}[theorem]{Remark}
\theoremstyle{notation}
\numberwithin{equation}{section}
\theoremstyle{corollary}
\newtheorem{corollary}[theorem]{Corollary}
\newcommand{\Hom}{\mathrm{Hom}}
\newcommand{\Ho}{\mathrm{Ho}}
\begin{document}

\title[]{Derived noncommutative Zariski immersion and an equivalent reformulation of Friedlander-Milnor conjecture}

\author[]{Ilias Amrani}
\address{ Academic University of St-Petersburg, Russian Federation.}

\email{ilias.amranifedotov@gmail.com}


\subjclass[2000]{ 55N91, 14Lxx, 18G15.}


\keywords{Zariski Open Immersion, Friedlander-Milnor Conjecture, Ring Spectra, $\mathcal{A}_{\infty}$-Differential Graded Algebras, Noncommutative Derived Algebraic Geometry, Triangulated Categories.}

\begin{abstract}
We introduce the notion of derived formal noncommutative Zariski immersion for differential graded algebra with examples from topology. We illustrate the importance of such notion by reformulating the Friedlander-Milnor conjecture in terms of formal noncommutative Zariski immersions. This paper is based on the language developed by Dwyer, Greenless and Iyendar. 
 \end{abstract}
\maketitle
\section*{Notation}
\begin{itemize}
\item $p$ is a prime number and $\mathbf{F}_{p}=k$ is the field with $p$ elements.
\item If $G$ is a discrete group we denote the associated group algebra by $k[G]$.
\item $C_{\ast}(X;k)$ is the the singular chain complex associated to the space $X$ with coefficients in $k$.
\item $C^{\ast}(X;k)$ is the the singular cochain complex associated to the space $X$ with coefficients in $k$.
\item $\Ho(\mathsf{M})$ is the homotopy category of the model category $\mathsf{M}$.
\item If $A$ is a differential graded algebra (or ring spectra) we denote the stable model category of unbounded differential graded left $A$-modules by $A-\mathsf{Mod}$  cf \cite{EKMM}.
\item the derived tensor product is denoted by $-\otimes^{\mathbf{L}}-$ and the derived enriched hom functor (which is an unbounded chain complex) is denoted by $\mathbf{R}\Hom(-,-)$ cf. \cite{EKMM}.
\item $\mathsf{Top}$ is the symmetric monoidal closed model category of compactly generated weakly hausdorff spaces. 
\item $\Omega X$ is the space of loops of the pointed space $X$.
\item $BG$ is the classifying space of the the topological group $G$ and $EG$ is the universal contractible free $G$-space. 
\item $L_{p}\mathsf{Top}$ is the Bousfield localization of $\mathsf{Top}$ with respect to the homology functor $H_{\ast}(-;k)$. The functorial fibrant replacement is denoted by $X\rightarrow L_{p}X$. 
\item $\mathcal{E}_{\infty}$ is an operad whose algebras are $\mathcal{E}_{\infty}$-algebras and $\mathcal{A}_{\infty}$ is an operad whose algebras are $\mathcal{A}_{\infty}$-algebras.
\end{itemize}
\section{Noncommutative Zariski immersion}
 
 The notion of \textit{derived formal Zariski immersion} shows up in the context of $\mathcal{E}_{\infty}$-ring spectrum \cite{toen2003brave} seeking for a formulation of the Zariski topology in the setting of derived algebraic geometry. Our goal is to extend the definition in the noncommutative world and show that many interesting constructions in topology can be formulated using this notion. 
\begin{definition}\label{zariski}
Let $i: A\rightarrow B$ be a map of $\mathcal{A}_{\infty}$-differential graded algebras. The map $i$ is a derived formal Zariski immersion if the induced map of chain complexes  $B\otimes^{\mathbf{L}}_{A}B\rightarrow B$ is a quasi-isomorphism. 
  
\end{definition} 
It is not hard to see that our definition \ref{zariski} is equivalent to the one given in \cite[Definition 2.1.1]{toen2003brave} in the commutative setting. The definition is inspired form algebraic geometry cf \cite[Lemma 2.1.4]{toen2003brave}. If $spec(T)\rightarrow spec(R)$ is a Zariski open immersion between affine schemes then $R\rightarrow T$ is a derived formal Zariski immersion. The derived formal Zariski immersion are closely related to the smashing localization. For more details we refer to \cite{toen2003brave}.
 \begin{lemma}
 Let $i: A\rightarrow B$ be a a derived formal Zariski immersion then derived right adjoint functor between the triangulated categories
 $$\mathbf{R}i_{\ast}:\Ho(B-\mathsf{Mod})\rightarrow \Ho(A-\mathsf{Mod}) $$ 
 is fully faithful.
 \end{lemma}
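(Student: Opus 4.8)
The plan is to reduce the statement to the single quasi-isomorphism furnished by the hypothesis, using the standard criterion that a right adjoint is fully faithful precisely when the counit of the adjunction is invertible, and then to spread this fact from the generator $B$ to all of $\Ho(B-\mathsf{Mod})$ by a localizing-subcategory argument.

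First I would fix the relevant adjunction on homotopy categories. The map $i\colon A\to B$ gives restriction of scalars $i_{\ast}\colon B-\mathsf{Mod}\to A-\mathsf{Mod}$, which is exact (it is the identity on underlying complexes), so $\mathbf{R}i_{\ast}=i_{\ast}$ requires no derived correction; its left adjoint is extension of scalars, with total left derived functor $\mathbf{L}i^{\ast}=B\otimes^{\mathbf{L}}_{A}(-)\colon\Ho(A-\mathsf{Mod})\to\Ho(B-\mathsf{Mod})$. Being the derived functor of a left Quillen functor, $\mathbf{L}i^{\ast}$ is triangulated and preserves arbitrary coproducts and homotopy colimits; $\mathbf{R}i_{\ast}$ is triangulated and likewise preserves coproducts; and $(\mathbf{L}i^{\ast},\mathbf{R}i_{\ast})$ is an adjoint pair, with counit the natural action map $\varepsilon_{N}\colon \mathbf{L}i^{\ast}\mathbf{R}i_{\ast}N=B\otimes^{\mathbf{L}}_{A}N\to N$ for $N\in B-\mathsf{Mod}$.

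Next I would invoke the categorical fact that the right adjoint $\mathbf{R}i_{\ast}$ is fully faithful if and only if $\varepsilon$ is a natural isomorphism: via the adjunction identification $\mathbf{R}\Hom_{A}(\mathbf{R}i_{\ast}M,\mathbf{R}i_{\ast}N)\simeq\mathbf{R}\Hom_{B}(B\otimes^{\mathbf{L}}_{A}M,N)$ and naturality of $\varepsilon$, the canonical comparison map $\mathbf{R}\Hom_{B}(M,N)\to\mathbf{R}\Hom_{A}(\mathbf{R}i_{\ast}M,\mathbf{R}i_{\ast}N)$ is precomposition by $\varepsilon_{M}$, so everything comes down to showing that $\varepsilon_{N}$ is a quasi-isomorphism for every $B$-module $N$. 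For $N=B$ this map is precisely the multiplication $B\otimes^{\mathbf{L}}_{A}B\to B$, a quasi-isomorphism by Definition \ref{zariski}. Then I would let $\mathsf{L}\subseteq\Ho(B-\mathsf{Mod})$ be the full subcategory of those $N$ with $\varepsilon_{N}$ invertible; since $\varepsilon$ is a natural transformation between triangulated, coproduct-preserving endofunctors ($\mathbf{L}i^{\ast}\mathbf{R}i_{\ast}$ and $\mathrm{id}$), the subcategory $\mathsf{L}$ is closed under shifts, cones, retracts and arbitrary coproducts, i.e.\ it is localizing; it contains the compact generator $B$, hence $\mathsf{L}=\Ho(B-\mathsf{Mod})$ and $\mathbf{R}i_{\ast}$ is fully faithful.

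The only genuine work is the first step — pinning down the derived adjunction, the exactness of $\mathbf{R}i_{\ast}$, and the preservation of homotopy colimits by $\mathbf{L}i^{\ast}$, all at the level of homotopy categories of modules over $\mathcal{A}_{\infty}$-differential graded algebras — together with the standard input that $B$ generates $\Ho(B-\mathsf{Mod})$ as a localizing subcategory. Granting those, the conclusion is immediate from the defining quasi-isomorphism of a derived formal Zariski immersion; no further computation is needed.
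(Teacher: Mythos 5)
Your argument is correct and follows essentially the same route as the paper: both reduce full faithfulness of $\mathbf{R}i_{\ast}$ to invertibility of the counit, observe that at $N=B$ the counit is exactly the quasi-isomorphism $B\otimes^{\mathbf{L}}_{A}B\rightarrow B$ from Definition \ref{zariski}, and propagate this to all of $\Ho(B-\mathsf{Mod})$ using that $B$ is a compact generator. You merely make explicit the localizing-subcategory bookkeeping (closure under shifts, cones, retracts and coproducts) that the paper's one-line proof leaves implicit.
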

 \begin{proof}
 Since $B\otimes^{\mathbf{L}}_{A}B\rightarrow B$ is a quasi-isomorphism and $B$ is a compact generator for $\Ho(B-\mathsf{Mod})$ we have that $B\otimes_{A}^{\mathbf{L}}\mathbf{R}i_{\ast}X\cong X$ for any object $X\in \Ho(B-\mathsf{Mod})$. Moreover, $\Ho(B-\mathsf{Mod})$ is a smashing localization of $\Ho(A-\mathsf{Mod})$ with respect to the left derived adjoint $B\otimes_{A}^{\mathbf{L}}-$. 
 \end{proof}
 \begin{remark}
 In the case when $\mathbf{R}i_{\ast}:\Ho(B-\mathsf{Mod})\rightarrow \Ho(A-\mathsf{Mod}) $ is fully faithful, the counit map $B\otimes_{A}^{\mathbf{L}}B\cong B\otimes_{A}^{\mathbf{L}}\mathbf{R}i_{\ast}B\rightarrow B$ is a quasi-isomorphism. Hence our definition \ref{zariski} is equivalent to the definition \cite[Definition 2.1.1]{toen2003brave} in the commutative case.   
 \end{remark}
 \section{Friedlander-Milnor Conjecture}
Let $G$ be a connected Lie group and let $G^{\delta}$ be the underlying discrete group, Milnor has conjectured \cite{milnor1983homology} that the natural group homomorphism 
 $$i: G^{\delta}\rightarrow G$$
 induces an isomorphism in cohomology of classifying space with coefficients in the finite field $\mathbf{F}_{p}$
$$i^{\ast}:H^{\ast}(BG;\mathbf{F}_{p})\rightarrow H^{\ast}(BG^{\delta};\mathbf{F}_{p}).$$ 
The conjecture is known to be true for solvable Lie groups in particular for $\mathbf{R}$ and the $n$-torus $T= S^1\times\dots\times S^1$ \cite{milnor1983homology}. Notice that the map $i^{\ast}$ is always injective. For more details we refer the reader to \cite[Chapter 5]{knudson2001homology}.   
  \begin{theorem}\label{milnor}
  The Friedlander-Milnor conjecture (for connected Lie group) is true if and only if the natural map of differential graded algebras $\mathbf{F}_{p}[G^{\delta}]\rightarrow C_{\ast}(G;\mathbf{F}_{p})$ is a derived formal Zariski immersion in the sense of \ref{zariski}. 
  \end{theorem}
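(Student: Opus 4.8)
The plan is to translate both sides of the asserted equivalence into a single statement about derived tensor products over $k=\mathbf{F}_{p}$. Write $A=k[G^{\delta}]$ and $B=C_{\ast}(G;k)$, where $B$ carries the differential graded algebra structure coming from the multiplication of $G$ (via the Alexander--Whitney map), $i:A\to B$ is the natural map, and $k$ also denotes the trivial module over $A$, resp. over $B$, obtained from the augmentations $C_{\ast}(G^{\delta};k)\to k$ and $C_{\ast}(G;k)\to k$, which are compatible with $i$. The first step is the Rothenberg--Steenrod / bar-construction identification $C_{\ast}(BG;k)\simeq k\otimes^{\mathbf{L}}_{B}k$ and, similarly (and with no Eilenberg--Zilber needed, since $G^{\delta}$ is discrete), $C_{\ast}(BG^{\delta};k)\simeq k\otimes^{\mathbf{L}}_{A}k$, both coming from $BG=|N_{\bullet}G|$, $BG^{\delta}=|N_{\bullet}G^{\delta}|$; moreover the map on classifying spaces induced by $G^{\delta}\to G$ is identified with the map $i_{\ast}:k\otimes^{\mathbf{L}}_{A}k\to k\otimes^{\mathbf{L}}_{B}k$ induced by $i$. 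Since $k$ is a field, an isomorphism on $H_{\ast}(-;k)$ is equivalent to one on $H^{\ast}(-;k)$, so I would record as a preliminary observation that the Friedlander--Milnor conjecture for $G$ holds if and only if $i_{\ast}:k\otimes^{\mathbf{L}}_{A}k\to k\otimes^{\mathbf{L}}_{B}k$ is a quasi-isomorphism.

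For the direction ``Zariski immersion $\Rightarrow$ Friedlander--Milnor'' I would simply apply the functor $k\otimes^{\mathbf{L}}_{B}(-)\otimes^{\mathbf{L}}_{B}k$ to the multiplication map $\mu:B\otimes^{\mathbf{L}}_{A}B\to B$. By associativity of the derived tensor product together with $k\otimes^{\mathbf{L}}_{B}B\simeq k\simeq B\otimes^{\mathbf{L}}_{B}k$, the source becomes $k\otimes^{\mathbf{L}}_{A}k$, the target becomes $k\otimes^{\mathbf{L}}_{B}k$, and the resulting map is exactly $i_{\ast}$ (this is the base-change identity already used in the Remark following the Lemma; in our situation it can also be read off from $B(\ast,G,B(G,G^{\delta},G))\simeq B(\ast,G^{\delta},G)$ and the analogous simplification with the second $G$). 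Hence if $\mu$ is a quasi-isomorphism so is $i_{\ast}$, and by the preliminary observation the conjecture holds.

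For the converse I need the reverse implication, which is where the work lies. Two facts drive it. First, $B=C_{\ast}(G;k)$ is connective with $H_{0}(B)=H_{0}(G;k)=k$, using precisely that $G$ is connected; and since $A=k[G^{\delta}]$ is concentrated in degree $0$ and $B$ is connective, the derived tensor product $B\otimes^{\mathbf{L}}_{A}B$, and the cofiber of $\mu$, are connective. Second, the key lemma: for a connective differential graded algebra $B$ with $H_{0}(B)=k$ and any bounded-below $B$-module $M$ (left or right), $k\otimes^{\mathbf{L}}_{B}M\simeq 0$ (resp. $M\otimes^{\mathbf{L}}_{B}k\simeq 0$) forces $M\simeq 0$. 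I would prove this from the bar spectral sequence $E^{2}_{p,q}=\mathrm{Tor}^{H_{\ast}(B)}_{p}(k,H_{\ast}(M))_{q}\Rightarrow H_{p+q}(k\otimes^{\mathbf{L}}_{B}M)$: if $n$ is the least degree with $H_{n}(M)\neq 0$, then since $H_{\ast}(B)$ is non-negatively graded with $H_{0}(B)=k$ one gets $E^{2}_{p,q}=0$ unless $q\geq n+p$, so $E^{2}_{0,n}=k\otimes_{k}H_{n}(M)=H_{n}(M)$ is a permanent cycle that neither supports nor is hit by a differential; hence $H_{n}(k\otimes^{\mathbf{L}}_{B}M)\neq 0$, a contradiction.

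Granting this, the converse runs as follows. Assume the Friedlander--Milnor conjecture, so $i_{\ast}:k\otimes^{\mathbf{L}}_{A}k\to k\otimes^{\mathbf{L}}_{B}k$ is a quasi-isomorphism. Let $N$ be the cofiber, as a $B$-bimodule, of $\mu:B\otimes^{\mathbf{L}}_{A}B\to B$; it is connective by the first fact. Applying the exact functor $k\otimes^{\mathbf{L}}_{B}(-)\otimes^{\mathbf{L}}_{B}k$ and using the computation of the second paragraph, $k\otimes^{\mathbf{L}}_{B}N\otimes^{\mathbf{L}}_{B}k\simeq\mathrm{cofib}(i_{\ast})\simeq 0$. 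Now $k\otimes^{\mathbf{L}}_{B}N$ is a connective right $B$-module whose derived tensor with $k$ over $B$ vanishes, so the key lemma gives $k\otimes^{\mathbf{L}}_{B}N\simeq 0$; applying the key lemma once more, to the connective left $B$-module $N$, gives $N\simeq 0$. Thus $\mu$ is a quasi-isomorphism, i.e. $i$ is a derived formal Zariski immersion in the sense of Definition \ref{zariski}. The main obstacle is this converse direction, and concretely the key lemma together with the accompanying connectivity bookkeeping; the hypothesis that $G$ is connected enters exactly to guarantee $H_{0}(C_{\ast}(G;k))=k$, which is what makes the trivial module detect bounded-below modules through $-\otimes^{\mathbf{L}}_{B}-$.
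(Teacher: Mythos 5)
Your proposal is correct, and on the hard direction it takes a genuinely different route from the paper. The paper's forward direction goes through the Lemma (fully faithfulness of $\mathbf{R}i_{\ast}$) and the identification of $\mathbf{R}\Hom_{C_{\ast}(G;k)}(k,k)$ and $\mathbf{R}\Hom_{k[G^{\delta}]}(k,k)$ with cochains on $BG$ and $BG^{\delta}$; yours applies $k\otimes^{\mathbf{L}}_{B}(-)\otimes^{\mathbf{L}}_{B}k$ to $\mu$ and uses the homological Rothenberg--Steenrod identification — formally equivalent, both fine (in both cases the compatibility of the bar-construction equivalences with the map $BG^{\delta}\to BG$, which you assert tersely, deserves a sentence). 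The real divergence is the converse. The paper argues topologically: Friedlander--Milnor is equivalent (Knudson, Lemma 5.1.2) to the $k$-acyclicity of $G\times_{G^{\delta}}EG^{\delta}$, Moore's comparison theorem identifies $C_{\ast}(G\times_{G^{\delta}}EG^{\delta};k)$ with $B\otimes^{\mathbf{L}}_{A}k$, and then the finiteness hypotheses ($H_{\ast}(G;k)$ of finite total dimension, each $H^{i}(BG;k)$ finite-dimensional) are used, via the Dwyer--Greenlees--Iyengar smallness machinery, to see that $B$ lies in the thick subcategory of $\Ho(B\text{-}\mathsf{Mod})$ generated by $k$, which bootstraps the one-sided equivalence $B\otimes^{\mathbf{L}}_{A}k\simeq k$ to the two-sided one $B\otimes^{\mathbf{L}}_{A}B\simeq B$. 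You instead stay entirely algebraic: after reformulating the conjecture as $i_{\ast}:k\otimes^{\mathbf{L}}_{A}k\to k\otimes^{\mathbf{L}}_{B}k$ being a quasi-isomorphism, you kill the cofiber $N$ of $\mu$ by a two-step derived Nakayama argument, using only connectivity of $A$, $B$, $N$ and $H_{0}(B)=k$ (i.e.\ $G$ connected). Your key lemma is correct as stated (the lowest surviving group is $k\otimes_{H_{0}(B)}H_{n}(M)=H_{n}(M)$, and the bar spectral sequence converges for connective $B$ and bounded-below $M$), so your proof avoids both the auxiliary space $G\times_{G^{\delta}}EG^{\delta}$ and all finiteness/cosmallness input; it in fact proves the stronger, purely algebraic statement that for any augmented map of connective differential graded algebras with $H_{0}(B)=k$, an isomorphism on $\mathrm{Tor}_{\ast}(k,k)$ forces $B\otimes^{\mathbf{L}}_{A}B\to B$ to be a quasi-isomorphism. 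The paper's route, by contrast, makes the link with the classical formulation of the conjecture (acyclicity of Milnor's fiber) explicit and exhibits the smashing-localization picture, at the price of extra hypotheses that your argument shows are unnecessary for this equivalence.
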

  \begin{proof}
  Suppose that $i:\mathbf{F}_{p}[G^{\delta}]\rightarrow C_{\ast}(G;\mathbf{F}_{p})$ is a derived formal Zariski immersion, it induces a fully faithful functor 
  $$\mathbf{R}i_{\ast}:\Ho(C_{\ast}(G;\mathbf{F}_{p})-\mathsf{Mod})\rightarrow \Ho(\mathbf{F}_{p}[G^{\delta}]-\mathsf{Mod}) $$ 
  in particular 
  $$\mathbf{R}\Hom_{C_{\ast}(G;\mathbf{F}_{p})}(\mathbf{F}_{p},\mathbf{F}_{p})\simeq \mathbf{R}\Hom_{\mathbf{F}_{p}[G^{\delta}]}(\mathbf{F}_{p},\mathbf{F}_{p}).$$
  On another hand by \cite[Section 4.22]{dwyer2006duality} (or \cite{rothenberg19725} where the $E_{2}$-term of a converging spectral sequence is constructed), we have that $$H_{\ast}\mathbf{R}\Hom_{C_{\ast}(G;\mathbf{F}_{p})}(\mathbf{F}_{p},\mathbf{F}_{p})\simeq H^{\ast}(BG;\mathbf{F}_{p}),$$
  and 
  $$H_{\ast}\mathbf{R}\Hom_{\mathbf{F}_{p}[G^{\delta}]}(\mathbf{F}_{p},\mathbf{F}_{p})\simeq H^{\ast}(BG^{\delta};\mathbf{F}_{p}).$$
  Now, we suppose that the Friedlander-Milnor conjecture is true and we consider the following fibre sequence (of $G$-principle bundle) of topological spaces
  $$G\rightarrow G\times_{G^{\delta}}EG^{\delta}\rightarrow BG^{\delta}.$$ 
  In particular it induces the following fibre sequence 
  $$ G\times_{G^{\delta}}EG^{\delta}\rightarrow BG^{\delta}\rightarrow BG.$$
  Since we have supposed the Friedlander-Milnor is true, it implies by \cite[Lemma 5.1.2]{knudson2001homology} that $H_{\ast }(G\times_{G^{\delta}}EG^{\delta},\mathbf{F}_{p})$ has the homology of a point. Therefore by \cite[Theorem 3.1, Corollaire 2]{moore1959algebre}, we have a quasi-isomorphism between chain complexes
  $$ C_{\ast}(G\times_{G^{\delta}}EG^{\delta};\mathbf{F}_{p})\simeq C_{\ast}(G;\mathbf{F}_{p})\otimes^{\mathbf{L}}_{\mathbf{F}_{p}[G^{\delta}]}\mathbf{F}_{p}\simeq \mathbf{F}_{p}.$$
By hypothesis, $H_{\ast}(G,\mathbf{F}_{p})$ has finite total dimension and $H^{i} (BG;\mathbf{F}_{p})$ has a finite dimension for each $i\geq 0$ ($G$ is a connected Lie group). It follows that the augmentation map $C_{\ast}(G,\mathbf{F}_{p})\rightarrow \mathbf{F}_{p}$ is cosmall, in the sense that the thick subcategory of $\Ho(C_{\ast}(G;\mathbf{F}_{p})-\mathsf{Mod})$ generated by $\mathbf{F}_{p}$ contains $C_{\ast}(G;\mathbf{F}_{p})$ \cite[Section 5.6]{dwyer2006duality}. Therefore the equivalence 
  $C_{\ast}(G;\mathbf{F}_{p})\otimes_{\mathbf{F}_{p}[G^{\delta}]}^{\mathbf{L}}\mathbf{F}_{p}\simeq \mathbf{F}_{p}$ implies that 
  $$ C_{\ast}(G;\mathbf{F}_{p})\otimes_{\mathbf{F}_{p}[G^{\delta}]}^{\mathbf{L}}C_{\ast}(G;\mathbf{F}_{p})\simeq C_{\ast}(G;\mathbf{F}_{p}),$$
  since the derived functor $C_{\ast}(G;\mathbf{F}_{p})\otimes_{\mathbf{F}_{p}[G^{\delta}]}^{\mathbf{L}}-$ commutes with homotopy colimits and shifts.
  \end{proof}
  \begin{corollary}
  The natural map of $\mathcal{E}_{\infty}$-differential graded algebras $\mathbf{F}_{p}[T^{\delta}]\rightarrow C_{\ast}(T,\mathbf{F}_{p})$
  is a formal derived Zariski immersion for every prime $p$, where $T$ is a torus.  
   \end{corollary}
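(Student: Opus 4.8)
The plan is to reduce the corollary to Theorem \ref{milnor} by verifying the Friedlander-Milnor conjecture for the torus. First I would recall that $T = S^1 \times \dots \times S^1$ is a compact connected solvable (in fact abelian) Lie group, and that Milnor established his conjecture for solvable Lie groups in \cite{milnor1983homology}; alternatively, one can build the result up from the case $G = S^1$ by a K\"unneth-type argument, using that for the circle $B(S^1)^\delta = B\mathbf{R}/\mathbf{Z}$ fits into the classical computation (the map $H^*(BS^1;\mathbf{F}_p) \to H^*(B(S^1)^\delta;\mathbf{F}_p)$ is an isomorphism because $\mathbf{R}$ is contractible as a discrete group in $\mathbf{F}_p$-cohomology and $B\mathbf{Z} = S^1$). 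Either way, the hypothesis of Theorem \ref{milnor} is satisfied for $G = T$.

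Next I would invoke Theorem \ref{milnor} directly: since the Friedlander-Milnor conjecture holds for $T$, the natural map of differential graded algebras $\mathbf{F}_p[T^\delta] \to C_*(T;\mathbf{F}_p)$ is a derived formal Zariski immersion in the sense of Definition \ref{zariski}. The only additional content of the corollary over the theorem is the assertion that this map can be promoted to a map of $\mathcal{E}_\infty$-differential graded algebras, rather than merely $\mathcal{A}_\infty$ (or associative) ones. For this I would point out that $T$ is abelian, so $T^\delta$ is an abelian group and $\mathbf{F}_p[T^\delta]$ is a commutative, hence $\mathcal{E}_\infty$, differential graded algebra; likewise, for a topological abelian group $T$ the singular chains $C_*(T;\mathbf{F}_p)$ carry a natural $\mathcal{E}_\infty$-algebra structure induced by the multiplication $T \times T \to T$ together with the Eilenberg-Zilber $\mathcal{E}_\infty$-structure on chains, and the group homomorphism $i: T^\delta \to T$ is a map of topological abelian groups, so $i_*$ is a map of $\mathcal{E}_\infty$-dg-algebras.

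Finally, since the underlying map of $\mathcal{A}_\infty$- (or associative) dg-algebras of this $\mathcal{E}_\infty$-map is the map treated in Theorem \ref{milnor}, and the derived formal Zariski immersion condition of Definition \ref{zariski} only refers to the underlying map of chain complexes $B \otimes^{\mathbf{L}}_A B \to B$, the conclusion transfers verbatim: $\mathbf{F}_p[T^\delta] \to C_*(T;\mathbf{F}_p)$ is a formal derived Zariski immersion for every prime $p$.

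The main obstacle, modest as it is, is the bookkeeping around the $\mathcal{E}_\infty$-refinement: one must be slightly careful that the $\mathcal{E}_\infty$-structures on $\mathbf{F}_p[T^\delta]$ and on $C_*(T;\mathbf{F}_p)$ are compatible, i.e. that $i_*$ is genuinely a map of $\mathcal{E}_\infty$-algebras and not merely an $\mathcal{A}_\infty$-map between $\mathcal{E}_\infty$-algebras. This is handled by the naturality of the Eilenberg-Zilber operad action on singular chains of a topological space with respect to continuous maps, applied to $i: T^\delta \to T$; everything else is a direct citation of Theorem \ref{milnor} and of the solvable case of the Friedlander-Milnor conjecture.
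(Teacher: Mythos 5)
Your proposal is correct and follows the same route as the paper: the corollary is obtained by combining Theorem \ref{milnor} with Milnor's result that the conjecture holds for solvable (in particular abelian) Lie groups such as the torus. Your additional remarks on the $\mathcal{E}_\infty$-compatibility of the map are reasonable but go beyond what the paper records, which treats this as immediate.
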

   \begin{proof}
   It follows directly from \ref{milnor} and the fact that Friedlander-Milnor conjecture is true for any torus \cite{milnor1983homology}.
   \end{proof}
   \begin{corollary}
   Let $G$ be a connected Lie group, the natural map of $\mathcal{A}_{\infty}$-differential graded algebras $\mathbf{F}_{p}[G^{\delta}]\rightarrow C_{\ast}(G,\mathbf{F}_{p})$ is a formal derived Zariski immersion for every prime $p$.
   \end{corollary}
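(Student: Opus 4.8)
The plan is to compute the derived tensor product $C_{\ast}(G;\mathbf{F}_{p})\otimes^{\mathbf{L}}_{\mathbf{F}_{p}[G^{\delta}]}C_{\ast}(G;\mathbf{F}_{p})$ by geometric means and to recognise the counit of Definition \ref{zariski} as a fibration whose fibre is the homotopy fibre of $BG^{\delta}\to BG$. First I would model the derived tensor product by the two-sided bar construction over the group algebra. The left and right $\mathbf{F}_{p}[G^{\delta}]$-module structures on $C_{\ast}(G;\mathbf{F}_{p})$ are induced by left and right translation of the discrete group $G^{\delta}$ on the space $G$, so by the Eilenberg--Zilber theorem together with \cite[Theorem 3.1, Corollaire 2]{moore1959algebre}, exactly as in the proof of \ref{milnor}, the bar complex is quasi-isomorphic to the chains on the two-sided bar construction of spaces $B(G,G^{\delta},G)\simeq (G\times G)\times_{G^{\delta}}EG^{\delta}$, where $G^{\delta}$ acts by the balanced rule $g\cdot(x,y)=(xg^{-1},gy)$.

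Next I would identify the counit geometrically. The multiplication $\mu\colon G\times G\to G$, $(x,y)\mapsto xy$, is invariant under the balanced action and hence descends to a map $\bar{\mu}\colon(G\times G)\times_{G^{\delta}}EG^{\delta}\to G$ which on chains realises the natural map $C_{\ast}(G;\mathbf{F}_{p})\otimes^{\mathbf{L}}_{\mathbf{F}_{p}[G^{\delta}]}C_{\ast}(G;\mathbf{F}_{p})\to C_{\ast}(G;\mathbf{F}_{p})$. Inspecting the fibres, the fibre of $\bar{\mu}$ over any point is $G\times_{G^{\delta}}EG^{\delta}$ with $G^{\delta}$ acting by right translation, which is precisely the homotopy fibre of $BG^{\delta}\to BG$ appearing in the proof of \ref{milnor}. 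Thus $\bar{\mu}$ is an $\mathbf{F}_{p}$-homology isomorphism, equivalently $i$ is a derived formal Zariski immersion, if and only if this homotopy fibre is $\mathbf{F}_{p}$-acyclic.

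It therefore remains to establish the acyclicity of $G\times_{G^{\delta}}EG^{\delta}$ for an arbitrary connected Lie group $G$, and here I would reduce by structure theory. The Levi decomposition $G=R\rtimes S$ expresses $G^{\delta}$ as a split extension of $S^{\delta}$ by $R^{\delta}$, with $R$ the solvable radical and $S$ semisimple; replacing $S$ by a maximal compact subgroup up to $\mathbf{F}_{p}$-homology equivalence, the Serre spectral sequence comparing the fibrations $BG^{\delta}\to BG$, $BR^{\delta}\to BR$ and $BS^{\delta}\to BS$ reduces the problem to the two cases $G$ solvable and $G$ compact semisimple. The solvable case is supplied by Milnor's theorem \cite{milnor1983homology}, in the same way the torus corollary is obtained above.

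The main obstacle is the compact semisimple case: showing that $G\times_{G^{\delta}}EG^{\delta}$ is $\mathbf{F}_{p}$-acyclic for $G$ compact semisimple is the irreducible core of the problem, and no further reduction to the solvable situation is available. I would attack it through the rigidity and transfer techniques for the homology of Chevalley groups and $\mathrm{GL}_{n}$ over suitable fields, comparing $H^{\ast}(BG;\mathbf{F}_{p})$ with $H^{\ast}(BG^{\delta};\mathbf{F}_{p})$ along the \'etale comparison and the stabilisation arguments of \cite{knudson2001homology}; this step carries the entire substantive content of the corollary.
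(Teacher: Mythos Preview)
The paper's proof is a one-line appeal to Theorem~\ref{milnor} together with the external result \cite[Theorem~3.7]{amranimilnorzariski}; all of the substance is delegated to that reference. Your proposal instead first re-derives, in a pleasant geometric way, the equivalence already established in Theorem~\ref{milnor} (your identification of the counit with the bundle $\bar\mu$ having fibre $G\times_{G^{\delta}}EG^{\delta}$ is an alternative to the cosmallness argument used there), and then attempts to prove the Friedlander--Milnor conjecture outright via a Levi-decomposition reduction.

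The genuine gap is the final step. After your reduction you must show that $G\times_{G^{\delta}}EG^{\delta}$ is $\mathbf{F}_{p}$-acyclic for $G$ compact semisimple, which by Theorem~\ref{milnor} \emph{is} the Friedlander--Milnor conjecture for such $G$. You yourself say this ``carries the entire substantive content of the corollary'' and propose rigidity, transfer, \'etale comparison and stabilisation as in \cite{knudson2001homology}. That is a description of the available toolbox, not a proof: those techniques handle tori, solvable groups, stable ranges and certain algebraically closed situations, but they do not by themselves settle the compact semisimple case. As written, your outline reduces the corollary to exactly the hard input that the paper imports wholesale from \cite[Theorem~3.7]{amranimilnorzariski}, and then stops. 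A smaller related issue: in your Levi step, passing from the semisimple factor $S$ to a maximal compact $K$ is innocuous for $BS\simeq BK$, but the implicit claim that $BK^{\delta}\to BS^{\delta}$ is an $\mathbf{F}_{p}$-homology equivalence is itself nontrivial and already close to an instance of the conjecture, so that reduction also needs justification.
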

   \begin{proof}
   It is a direct consequence of \ref{milnor} and \cite[Theorem 3.7]{amranimilnorzariski}.
   \end{proof}
  \section{$p$-Completion}
 Let $X$ a connected space and $X\rightarrow L_{p}X$ a fibrant replacement of $X$ in $L_{p}\mathsf{Top}$. For any connected sapce there exists a discrete group $G$ such that $BG\rightarrow X$ is an equivalence in $L_{p}\mathsf{Top}$ \cite{kan1976every}. 
  \begin{proposition}
  Let $X$ be a simply connected and $G$ as before, such that for any natural number $i$ the dimension of $H^{i}(X;\mathbf{F}_{p})$ is finite and the total dimension of $H_{\ast}(\Omega X;\mathbf{F}_{p})$ is finite. Then the map $G\simeq\Omega BG\rightarrow \Omega X$ induces the map  
  $\mathbf{F}_{p}[G]\rightarrow C_{\ast}(\Omega X;\mathbf{F}_{p})$ of $\mathcal{A}_{\infty}$-differential graded algebras which is a derived formal Zariski immersion. 
  \end{proposition}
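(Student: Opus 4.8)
The plan is to transcribe the argument of Theorem~\ref{milnor}, with $\mathbf{F}_{p}[G^{\delta}]$ replaced by $\mathbf{F}_{p}[G]$ and $C_{\ast}(G;\mathbf{F}_{p})$ replaced by $R:=C_{\ast}(\Omega X;\mathbf{F}_{p})$. Here the map $i\colon\mathbf{F}_{p}[G]\to R$ is the one induced by the loop map $\Omega f\colon G\simeq\Omega BG\to\Omega X$ of the $\mathbf{F}_{p}$-homology equivalence $f\colon BG\to X$ supplied by \cite{kan1976every}; it is essential that $G$ is \emph{discrete}, so that $\Omega BG\simeq G$ and $C_{\ast}(\Omega BG;\mathbf{F}_{p})\simeq\mathbf{F}_{p}[G]$ as $\mathcal{A}_{\infty}$-algebras. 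As in Theorem~\ref{milnor}, everything reduces to the single quasi-isomorphism
$$ R\otimes^{\mathbf{L}}_{\mathbf{F}_{p}[G]}\mathbf{F}_{p}\simeq\mathbf{F}_{p} $$
realised by the canonical augmentation map: the passage from this to $R\otimes^{\mathbf{L}}_{\mathbf{F}_{p}[G]}R\simeq R$, which is precisely the condition in Definition~\ref{zariski}, is formal once $R\to\mathbf{F}_{p}$ is known to be cosmall.

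To see that $R\to\mathbf{F}_{p}$ is cosmall we use the hypotheses. Since $X$ is simply connected, the Rothenberg--Steenrod construction \cite{rothenberg19725} (see also \cite[Section~4.22]{dwyer2006duality}) gives $\mathbf{R}\Hom_{R}(\mathbf{F}_{p},\mathbf{F}_{p})\simeq C^{\ast}(X;\mathbf{F}_{p})$, whose cohomology is finite-dimensional in each degree; moreover $R$ has finite total homology. By \cite[Section~5.6]{dwyer2006duality} these two facts put $R$ in the thick subcategory of $\Ho(R-\mathsf{Mod})$ generated by $\mathbf{F}_{p}$. Now the functor sending an $R$-module $M$ to $R\otimes^{\mathbf{L}}_{\mathbf{F}_{p}[G]}M$ (where $M$ is viewed as an $\mathbf{F}_{p}[G]$-module along $i$), together with its natural multiplication map to the identity functor, commutes with shifts, cofibre sequences and retracts; hence the full subcategory of those $M$ for which this map is a quasi-isomorphism is thick, and it contains $\mathbf{F}_{p}$ by the displayed equivalence above, so it contains $R$. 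Applying this to $M=R$ gives $R\otimes^{\mathbf{L}}_{\mathbf{F}_{p}[G]}R\simeq R$, as wanted.

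It remains to establish $R\otimes^{\mathbf{L}}_{\mathbf{F}_{p}[G]}\mathbf{F}_{p}\simeq\mathbf{F}_{p}$. Exactly as in Theorem~\ref{milnor}, Moore's theorem \cite[Theorem~3.1, Corollaire~2]{moore1959algebre} identifies this complex with the chains of the Borel construction $\Omega X\times_{G}EG$, the group $G$ acting on $\Omega X$ by translation through $\Omega f$; and $\Omega X\times_{G}EG$ is a model for the homotopy fibre $\mathrm{hofib}(f)$ of $f\colon BG\to X$. So we are reduced to showing that $\mathrm{hofib}(f)$ has the $\mathbf{F}_{p}$-homology of a point. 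This is the one place where simple-connectivity of $X$ is genuinely used: $f$ is an $\mathbf{F}_{p}$-homology isomorphism onto the simply connected $X$, so in the Eilenberg--Moore spectral sequence of the fibration $\mathrm{hofib}(f)\to BG\xrightarrow{f}X$, whose $E_{2}$-term is $\mathrm{Tor}_{H^{\ast}(X;\mathbf{F}_{p})}\bigl(H^{\ast}(BG;\mathbf{F}_{p}),\mathbf{F}_{p}\bigr)$, the ring isomorphism $f^{\ast}$ exhibits $H^{\ast}(BG;\mathbf{F}_{p})$ as the free $H^{\ast}(X;\mathbf{F}_{p})$-module of rank one, so $E_{2}=\mathbf{F}_{p}$; the finiteness of $H^{\ast}(X;\mathbf{F}_{p})$ in each degree forces strong convergence, whence $H^{\ast}(\mathrm{hofib}(f);\mathbf{F}_{p})=\mathbf{F}_{p}$. (One may instead invoke the Zeeman comparison theorem for the morphism of fibrations $\bigl(\mathrm{hofib}(f)\to BG\to X\bigr)\to\bigl(\ast\to X\xrightarrow{=}X\bigr)$, the base being simply connected of finite $\mathbf{F}_{p}$-type.) Since $\mathrm{hofib}(f)$ is connected ($\pi_{1}X=0$), the augmentation $C_{\ast}(\mathrm{hofib}(f);\mathbf{F}_{p})\to\mathbf{F}_{p}$ is a quasi-isomorphism, which is the required statement.

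The main obstacle is this last paragraph: identifying the algebraic object $R\otimes^{\mathbf{L}}_{\mathbf{F}_{p}[G]}\mathbf{F}_{p}$ with the chains of a genuine homotopy fibre (via Moore's theorem and the identification of $\Omega X\times_{G}EG$ with $\mathrm{hofib}(f)$), and then proving the vanishing of the reduced $\mathbf{F}_{p}$-homology of $\mathrm{hofib}(f)$, for which the hypothesis that $X$ is simply connected---guaranteeing convergence of the Eilenberg--Moore spectral sequence, equivalently triviality of the $\pi_{1}$-action in the Zeeman argument---is indispensable. The rest is a formal transcription of the proof of Theorem~\ref{milnor}.
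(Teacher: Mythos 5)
Your proposal is correct and follows essentially the same route as the paper, whose proof simply transcribes the argument of Theorem~\ref{milnor} (vanishing of the $\mathbf{F}_{p}$-homology of the Borel construction $\Omega X\times_{G}EG$, Moore's identification with $C_{\ast}(\Omega X;\mathbf{F}_{p})\otimes^{\mathbf{L}}_{\mathbf{F}_{p}[G]}\mathbf{F}_{p}$, then cosmallness from the finiteness hypotheses to upgrade to $R\otimes^{\mathbf{L}}_{\mathbf{F}_{p}[G]}R\simeq R$). You merely make explicit the steps the paper leaves implicit, e.g.\ proving the fibre's acyclicity via the Eilenberg--Moore (or Zeeman) argument where the paper appeals to the analogue of \cite[Lemma 5.1.2]{knudson2001homology}.
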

 \begin{proof}
 First, we recall that any topological monoid group-like is equivalent to a honest topological group. By the same arguments as in \ref{milnor}, we have that $H_{\ast}(\Omega X\times_{G}EG;\mathbf{F}_{p})=0$ for $\ast > 0$ i.e. has homology of a point. Using the finiteness conditions and the same arguments as in \ref{milnor}, we conclude that $$C_{\ast}(\Omega X;\mathbf{F}_{p})\otimes_{\mathbf{F}_{p}[G]}^{\mathbf{L}}C_{\ast}(\Omega X;\mathbf{F}_{p})\simeq C_{\ast}(\Omega X;\mathbf{F}_{p}).$$
 \end{proof}
 \bibliographystyle{plain}
\bibliography{solid}

\begin{thebibliography}{1}

\bibitem{amranimilnorzariski}
Ilias Amrani.
\newblock The friedlander-milnor conjectures.
\newblock {\em arXiv preprint math/1511.06101}, 2015.

\bibitem{dwyer2006duality}
W.G. Dwyer, J.P.C. Greenlees, and S.~Iyengar.
\newblock Duality in algebra and topology.
\newblock {\em Advances in Mathematics}, 200(2):357--402, 2006.

\bibitem{EKMM}
A.D. Elmendorf, I.~Kriz, M.A. Mandell, and J.P. May.
\newblock {\em Rings, modules, and algebras in stable homotopy theory}.
\newblock Number~47. American Mathematical Soc., 2007.

\bibitem{kan1976every}
Daniel~M Kan and William~P Thurston.
\newblock Every connected space has the homology of a k ($\pi$, 1).
\newblock {\em Topology}, 15(3):253--258, 1976.

\bibitem{knudson2001homology}
Kevin~P Knudson.
\newblock {\em Homology of linear groups}, volume 193.
\newblock Springer Science \& Business Media, 2001.

\bibitem{milnor1983homology}
John Milnor.
\newblock On the homology of {L}ie groups made discrete.
\newblock {\em Commentarii Mathematici Helvetici}, 58(1):72--85, 1983.

\bibitem{moore1959algebre}
John~C Moore.
\newblock Algebre homologique et homologie des espaces classifiants.
\newblock {\em S{\'e}minaire Henri Cartan}, 12(1):1--37, 1959.

\bibitem{rothenberg19725}
M~Rothenberg and N.E. Steenrod.
\newblock 5 the cohomology of classifying spaces of h-spaces.
\newblock {\em London Mathematical Society lecture note series}, (4):75, 1972.

\bibitem{toen2003brave}
Bertrand To{\"e}n and Gabriele Vezzosi.
\newblock " brave new" algebraic geometry and global derived moduli spaces of
  ring spectra.
\newblock {\em arXiv preprint math/0309145}, 2003.

\end{thebibliography}
\end{document}